\theoremstyle{plain}
\newtheorem{thm}{\bf Theorem}[section]
\newtheorem{prop}[thm]{\bf Proposition}
\newtheorem{lem}[thm]{\bf Lemma}
\newtheorem{cor}[thm]{\bf Corollary}
\theoremstyle{definition}
\theoremstyle{remark}
\newtheorem{rem}[thm]{\bf Remark}
\newtheorem{exam}[thm]{\bf Example}
\theoremstyle{example}
\def \CM{{\mathrm{CM}}}
\def \1{\mathbf 1}
\def \lk{\mathrm{link}}
\def \Del{\Delta}
\def \NN{\mathbb N}
\def \H{\mathcal H}
\def \G{\mathcal G}
\def \C{\mathcal C}
\begin{document}

\title[Cohen-Macaulay-ness in codimension for simplicial complexes and expansion functor]
{Cohen-Macaulay-ness in codimension for simplicial complexes and expansion functor}
\author{Rahim Rahmati-Asghar}

\keywords{$\CM_t$ simplicial complex, expansion functor, simple graph}

\subjclass[2010]{Primary: 13H10, Secondary: 05C75}

\begin{abstract}
In this paper we show that expansion of a Buchsbaum simplicial
complex is $\CM_t$, for an optimal integer $t\geq 1$. Also, by
imposing extra assumptions on a $\CM_t$ simplicial complex, we prove
that it can be obtained from a Buchsbaum complex.
\end{abstract}

\maketitle

\section*{Introduction}

Set $[n]:=\{x_1,\ldots,x_n\}$. Let $K$ be a field and
$S=K[x_1,\ldots,x_n]$, a polynomial ring over $K$. Let $\Del$ be a
simplicial complex over $[n]$. For an integer $t\geq 0$, Haghighi,
Yassemi and Zaare-Nahandi introduced the concept of $\CM_t$-ness
which is the pure version of simplicial complexes
\emph{Cohen-Macaulay in codimension $t$} studied in \cite{MiNoSw}. A
reason for the importance of $\CM_t$ simplicial complexes is that
they generalizes two notions for simplicial complexes: being
Cohen-Macaulay and Buchsbaum. In particular, by the results from
\cite{Re,Sh}, $\CM_0$ is the same as Cohen-Macaulayness and $\CM_1$
is identical with Buchsbaum property.

In \cite{HaYaZa1}, the authors described some combinatorial properties
of $\CM_t$ simplicial complexes and gave some characterizations of them
and generalized some results of \cite{Hi,Mi}. Then, in \cite{HaYaZa2},
they generalized a characterization of Cohen-Macaulay bipartite graphs
from \cite{HeHi} and \cite{CoNa} on unmixed Buchsbaum graphs.

Bayati and Herzog defined the expansion functor in the category of
finitely generated multigraded $S$-modules and studied some homological
behaviors of this functor (see \cite{BaHe}). The expansion functor helps
us to present other multigraded $S$-modules from a given finitely generated
multigraded $S$-module which may have some of algebraic properties of the
primary module. This allows to introduce new structures of a given multigraded
$S$-module with the same properties and especially to extend some homological
or algebraic results for larger classes (see for example \cite[Theorem 4.2]{BaHe}.
There are some combinatorial versions of expansion functor which we will recall in this paper.

The purpose of this paper is the study of behaviors of expansion
functor on $\CM_t$ complexes. We first recall some notations and
definitions of $\CM_t$ simplicial complexes in Section 1. In the
next section we describe the expansion functor in three contexts,
the expansion of a simplicial complex, the expansion of a simple
graph and the expansion of a monomial ideal. We show that there is a
close relationship between these three contexts. In Section 3 we
prove that the expansion of a $\CM_t$ complex $\Del$ with respect to
$\alpha$ is $\CM_{t+e-k+1}$ but it is not $\CM_{t+e-k}$ where
$e=\dim(\Del^\alpha)+1$ and $k$ is the minimum of the components of
$\alpha$ (see Theorem \ref{main}). In Section 4, we introduce a new
functor, called contraction, which acts in contrast to expansion
functor. As a main result of this section we show that if the
contraction of a $\CM_t$ complex is pure and all components of the
vector obtained from contraction are greater than or equal to $t$
then it is Buchsbaum (see Theorem \ref{contract,CM-t}). The section
is finished with a view towards the contraction of simple graphs.

\section{Preliminaries}

Let $t$ be a non-negative integer. We recall from \cite{HaYaZa1}
that a simplicial complex $\Del$ is called $\CM_t$ or
\emph{Cohen-Macaulay in codimension $t$} if it is pure and for every
face $F\in\Del$ with $\#(F)\geq t$, $\lk_\Del(F)$ is Cohen-Macaulay.
Every $\CM_t$ complex is also $\CM_r$ for all $r\geq t$. For $t<0$,
$\CM_t$ means $\CM_0$. The properties $\CM_0$ and $\CM_1$ are the
same as Cohen-Macaulay-ness and Buchsbaum-ness, respectively.

The link of a face $F$ in a simplicial complex $\Del$ is denoted by
$\lk_\Del(F)$ and is $$\lk_\Del(F)=\{G\in\Del: G\cap F=\emptyset,
G\cup F\in\Del\}.$$ The following lemma is useful for checking the
$\CM_t$ property of simplicial complexes:

\begin{lem}\label{CM-t eq}
(\cite[Lemma 2.3]{HaYaZa1}) Let $t\geq 1$ and let $\Del$ be a nonempty complex. Then $\Del$ is $\CM_t$ if and only if $\Del$ is pure and $\lk_\Del(v)$ is $\CM_{t-1}$ for every vertex $v\in\Del$.
\end{lem}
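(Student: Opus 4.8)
The plan is to argue both implications directly from the definition, using two elementary facts about links. The first is the transitivity identity $\lk_{\lk_\Del(v)}(G)=\lk_\Del(\{v\}\cup G)$, valid whenever $v\notin G$; unwinding both sides to the condition ``$v\notin H$, $H\cap G=\emptyset$, and $H\cup G\cup\{v\}\in\Del$'' shows they coincide. This identity is the engine of the proof: it trades a vertex $v\in F$ for a drop of one in cardinality, which is exactly the index shift from $t$ to $t-1$. The second fact is that a link of a pure complex is again pure, which I record as a preliminary observation.

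For the forward implication, assume $\Del$ is $\CM_t$, so $\Del$ is pure and $\lk_\Del(F)$ is Cohen--Macaulay whenever $\#(F)\geq t$. Fixing a vertex $v$, I first note that $\lk_\Del(v)$ is pure: if $H$ is one of its facets then $H\cup\{v\}$ extends to a facet $F$ of $\Del$ containing $v$, and maximality of $H$ forces $H=F\setminus\{v\}$, so $\dim H=\dim\Del-1$ regardless of $H$. Then for any $G\in\lk_\Del(v)$ with $\#(G)\geq t-1$ the transitivity identity gives $\lk_{\lk_\Del(v)}(G)=\lk_\Del(\{v\}\cup G)$, and since $\#(\{v\}\cup G)=\#(G)+1\geq t$ this link is Cohen--Macaulay by hypothesis. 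Hence $\lk_\Del(v)$ is $\CM_{t-1}$.

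For the converse, assume $\Del$ is pure and each $\lk_\Del(v)$ is $\CM_{t-1}$; since purity is given, I only need that $\lk_\Del(F)$ is Cohen--Macaulay for every face $F$ with $\#(F)\geq t$. As $t\geq 1$ such an $F$ is nonempty, so choose $v\in F$ and put $F'=F\setminus\{v\}$; then $F'\in\lk_\Del(v)$ with $\#(F')\geq t-1$, so $\lk_{\lk_\Del(v)}(F')$ is Cohen--Macaulay because $\lk_\Del(v)$ is $\CM_{t-1}$, and the transitivity identity rewrites this link as $\lk_\Del(F)$. The only genuinely delicate point is the purity-of-links observation, which is needed to certify the purity clause of $\CM_{t-1}$ in the forward direction; everything else is bookkeeping with the identity and the cardinality shift.
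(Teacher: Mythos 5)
Your proof is correct. Note that the paper itself offers no argument for this lemma --- it is quoted directly from \cite[Lemma 2.3]{HaYaZa1} --- so there is no internal proof to compare against; your route via the transitivity identity $\lk_{\lk_\Del(v)}(G)=\lk_\Del(\{v\}\cup G)$, the cardinality shift $\#(\{v\}\cup G)=\#(G)+1$, and the observation that links of pure complexes are pure is the standard argument and handles both directions (including the purity clause of $\CM_{t-1}$, which is the one point that is easy to overlook) completely.
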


Let $\G=(V(\G),E(\G))$ be a simple graph with vertex set $V$ and edge set $E$. The \emph{independence complex} of $\G$ is the complex $\Del_\G$ with vertex set $V$ and with faces consisting of independent sets of vertices of $\G$. Thus $F$ is a face of $\Del_\G$ if and only if there is no edge of $\G$ joining any two
vertices of $F$.

The \emph{edge ideal} of a simple graph $\G$, denoted by $I(\G)$, is an ideal of $S$ generated by all squarefree monomials $x_ix_j$ with $x_ix_j\in E(\G)$.

A simple graph $\G$ is called $\CM_t$ if $\Del_\G$ is $\CM_t$ and it is called \emph{unmixed} if $\Del_\G$ is pure.

For a monomial ideal $I\subset S$, We denote by $G(I)$ the unique minimal set of monomial generators of $I$.

\section{The expansion functor in combinatorial and algebraic concepts}

In this section we define the expansion of a simplicial complex and recall the expansion of a simple graph from \cite{Sc} and the expansion of a monomial ideal from \cite{BaHe}. We show that these concepts are intimately related to each other.

(1) Let $\alpha=(k_1,\ldots,k_n)\in\NN^n$. For $F=\{x_{i_1},\ldots,x_{i_r}\}\subseteq \{x_1,\ldots,x_n\}$ define
$$F^\alpha=\{x_{i_11},\ldots,x_{i_1k_{i_1}},\ldots,x_{i_r1},\ldots,x_{i_rk_{i_r}}\}$$
as a subset of $[n]^\alpha:=\{x_{11},\ldots,x_{1k_1},\ldots,x_{n1},\ldots,x_{nk_n}\}$. $F^\alpha$ is called \emph{the expansion of $F$ with respect to $\alpha$.}

For a simplicial complex $\Del=\langle F_1,\ldots,F_r\rangle$ on $[n]$, we define \emph{the expansion of $\Del$ with respect to $\alpha$} as the simplicial complex
$$\Del^\alpha=\langle F^\alpha_1,\ldots,F^\alpha_r\rangle.$$

(2) The \emph{duplication} of a vertex $x_i$ of a simple graph $\G$ was first introduced by Schrijver \cite{Sc} and it means extending its vertex set $V(\G)$ by a new vertex $x'_i$ and replacing $E(\G)$ by
$$E(\G)\cup\{(e\backslash\{x_i\})\cup\{x'_i\}:x_i\in e\in E(\G)\}.$$
For the $n$-tuple $\alpha=(k_1,\ldots,k_n)\in\NN^n$, with positive integer entries, the \emph{expansion} of the simple graph $\G$ is denoted by $\G^\alpha$ and it is obtained from $\G$ by successively duplicating $k_i-1$ times every vertex $x_i$.

(3) In \cite{BaHe} Bayati and Herzog defined the expansion functor in the category of finitely generated multigraded $S$-modules and studied some homological behaviors of this functor. We recall the expansion functor defined by them only in the category of monomial ideals and refer the reader to \cite{BaHe} for more general case in the category of finitely generated multigraded $S$-modules.

Let $S^\alpha$ be a polynomial ring over $K$ in the variables
$$x_{11},\ldots,x_{1k_1},\ldots,x_{n1},\ldots,x_{nk_n}.$$
Whenever $I\subset S$ is a monomial ideal minimally generated by
$u_1,\ldots,u_r$, the expansion of $I$ with respect to $\alpha$ is
defined by
$$I^\alpha=\sum^r_{i=1}P^{\nu_1(u_i)}_1\ldots P^{\nu_n(u_i)}_n\subset S^\alpha$$
where $P_j=(x_{j1},\ldots,x_{jk_j})$ is a prime ideal of $S^\alpha$ and $\nu_j(u_i)$ is the exponent of $x_j$ in $u_i$.

It was shown in \cite{BaHe} that the expansion functor is exact and
so $(S/I)^\alpha=S^\alpha/I^\alpha$. In the following lemmas we
describe the relations between the above three concepts of expansion
functor.

\begin{lem}\label{epansion s-R}
For a simplicial complex $\Del$ we have $I^\alpha_\Del=I_{\Del^\alpha}$. In particular, $K[\Del]^\alpha=K[\Del^\alpha]$.
\end{lem}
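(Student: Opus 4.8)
The plan is to prove the statement $I_\Delta^\alpha = I_{\Delta^\alpha}$ by showing the two monomial ideals have the same minimal generators, working directly from the definitions of the three expansion operations given in this section. The in-particular clause will then follow immediately, since $K[\Delta] = S/I_\Delta$ and $K[\Delta^\alpha] = S^\alpha/I_{\Delta^\alpha}$, combined with the exactness fact $(S/I)^\alpha = S^\alpha/I^\alpha$ recalled above from \cite{BaHe}.

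Let me lay out the proof strategy. Recall that the Stanley--Reisner ideal $I_\Delta$ is minimally generated by the squarefree monomials $x_{i_1}\cdots x_{i_s}$ corresponding to the minimal non-faces of $\Delta$. First I would compute the left-hand side $I_\Delta^\alpha$. Since each minimal generator $u = x_{i_1}\cdots x_{i_s}$ of $I_\Delta$ is squarefree, we have $\nu_j(u) = 1$ for $j \in \{i_1,\ldots,i_s\}$ and $\nu_j(u) = 0$ otherwise, so the defining formula gives $u^\alpha = P_{i_1}\cdots P_{i_s} = (x_{i_1 1},\ldots,x_{i_1 k_{i_1}})\cdots(x_{i_s 1},\ldots,x_{i_s k_{i_s}})$, whose generators are precisely the monomials $x_{i_1 a_1}\cdots x_{i_s a_s}$ with $1 \leq a_\ell \leq k_{i_\ell}$. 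Thus $I_\Delta^\alpha$ is generated by all such products ranging over the minimal non-faces. Next I would compute the right-hand side $I_{\Delta^\alpha}$ by identifying the minimal non-faces of $\Delta^\alpha$: a subset of $[n]^\alpha$ fails to be a face of $\Delta^\alpha = \langle F_1^\alpha,\ldots,F_r^\alpha\rangle$ exactly when the set of first-indices it touches is not contained in any face of $\Delta$, i.e.\ when its image under the projection $x_{i a} \mapsto x_i$ is a non-face of $\Delta$. The key combinatorial observation is that $\{x_{i_1 a_1},\ldots,x_{i_s a_s}\}$ (with distinct first-indices $i_1,\ldots,i_s$) is a minimal non-face of $\Delta^\alpha$ precisely when $\{x_{i_1},\ldots,x_{i_s}\}$ is a minimal non-face of $\Delta$.

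The heart of the argument is thus this correspondence between minimal non-faces of $\Delta^\alpha$ and minimal non-faces of $\Delta$. I would verify it in both directions: if $\{x_{i_1},\ldots,x_{i_s}\}$ is a minimal non-face of $\Delta$, then any lift $\{x_{i_1 a_1},\ldots,x_{i_s a_s}\}$ is a non-face of $\Delta^\alpha$ (because no face $F_j^\alpha$ can contain it, as that would force $\{x_{i_1},\ldots,x_{i_s}\} \subseteq F_j$), and it is minimal (removing any vertex leaves a lift of a proper subset, which is a face of $\Delta$, hence a face of $\Delta^\alpha$); conversely, a minimal non-face of $\Delta^\alpha$ can involve at most one vertex from each block $\{x_{i1},\ldots,x_{ik_i}\}$ --- since any two vertices sharing a first-index always lie together in some $F_j^\alpha$ whenever either one does --- and its projection must be a minimal non-face of $\Delta$. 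Matching the generators computed on both sides then shows $G(I_\Delta^\alpha) = G(I_{\Delta^\alpha})$, giving equality of ideals.

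The main obstacle I anticipate is being careful with the minimality and the single-vertex-per-block phenomenon in the converse direction: one must rule out minimal non-faces of $\Delta^\alpha$ containing two vertices $x_{ia}, x_{ib}$ from the same block, and argue that such a pair can never both appear in a minimal non-face (since $\{x_{ia}\}$ is a face iff $\{x_{ib}\}$ is, and more generally the two vertices are interchangeable inside any face of $\Delta^\alpha$). Once this symmetry is pinned down, the bijection between generators is clean and the remaining steps are bookkeeping.
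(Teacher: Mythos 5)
Your argument is correct, but it takes a genuinely different route from the paper. The paper's proof works with the facet (prime) decomposition: it writes $I_\Del=\bigcap_{i=1}^{r}P_{F_i^c}$, invokes Lemma~1.1 of \cite{BaHe} to commute the expansion functor with the intersection, obtaining $I_\Del^\alpha=\bigcap_{i=1}^{r}P_{F_i^c}^\alpha$, and then finishes with the single observation $P_{F_i^c}^\alpha=P_{(F_i^\alpha)^c}$ --- a three-line argument whose entire content is outsourced to the Bayati--Herzog lemma. You instead work with the generator (minimal non-face) description of both ideals and establish directly the bijection between minimal non-faces of $\Del$ and their lifts in $\Del^\alpha$, the key points being that a subset of $[n]^\alpha$ is a face of $\Del^\alpha$ exactly when its projection under $x_{ia}\mapsto x_i$ is a face of $\Del$, and that minimality forces at most one vertex per block (otherwise deleting a duplicate preserves non-face-ness). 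Your approach is more self-contained and elementary --- it needs no exactness or intersection-commutation input from \cite{BaHe} --- at the cost of the extra combinatorial bookkeeping on minimal non-faces; the paper's approach is shorter but leans on an external lemma. Both are complete proofs, and your handling of the one real subtlety (ruling out two vertices from the same block in a minimal non-face) is sound.
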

\begin{proof}
Let $\Del=\langle F_1,\ldots,F_r\rangle$. Since $I_\Del=\bigcap^r_{i=1}P_{F^c_i}$, it follows from Lemma 1.1 in \cite{BaHe} that $I^\alpha_\Del=\bigcap^r_{i=1}P^\alpha_{F^c_i}$. The result is obtained by the fact that $P^\alpha_{F^c_i}=P_{(F^\alpha_i)^c}$.
\end{proof}

Let $u=x_{i_1}\ldots x_{i_t}\in S$ be a monomial and $\alpha=(k_1,\ldots,k_n)\in\NN^n$. We set $u^\alpha=G((u)^\alpha)$
and for a set $A$ of monomials in $S$, $A^\alpha$ is defined $$A^\alpha=\bigcup_{u\in A} u^\alpha.$$
One can easily obtain the following lemma.

\begin{lem}
Let $I\subset S$ be a monomial ideal and $\alpha\in\NN^n$. Then $G(I^\alpha)=G(I)^\alpha$.
\end{lem}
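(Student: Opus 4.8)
The plan is to verify that $G(I)^\alpha$ is precisely the minimal monomial generating set of $I^\alpha$ by checking the two defining properties of such a set: that it generates $I^\alpha$, and that its elements are pairwise incomparable under divisibility (a monomial generating set is minimal if and only if no one of its members divides another). Write $G(I)=\{u_1,\dots,u_r\}$, so that by definition $I^\alpha=\sum_{i=1}^r(u_i)^\alpha$ and $G(I)^\alpha=\bigcup_{i=1}^r G((u_i)^\alpha)$. The first property is then immediate: the minimal generators of each summand $(u_i)^\alpha$ lie in $G(I)^\alpha$, so $G(I)^\alpha$ generates $I^\alpha$, which already gives the inclusion $G(I^\alpha)\subseteq G(I)^\alpha$.

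The technical heart is a block-degree description of $G((u)^\alpha)$ for a single monomial $u$. For a monomial $v\in S^\alpha$ and each $j$, let $\deg_j(v)$ denote the total degree of $v$ in the $j$-th block of variables $x_{j1},\dots,x_{jk_j}$. Since $(u)^\alpha=P_1^{\nu_1(u)}\cdots P_n^{\nu_n(u)}$ is a product of ideals supported on pairwise disjoint blocks of variables, I would first prove that a monomial $v$ lies in $(u)^\alpha$ exactly when $\deg_j(v)\geq\nu_j(u)$ for all $j$, and consequently that $v\in G((u)^\alpha)$ exactly when $\deg_j(v)=\nu_j(u)$ for all $j$. The membership claim follows by decomposing $v$ into its blockwise parts and using that $w\in P_j^{a}$ iff $\deg_j(w)\geq a$; the minimality claim follows because any monomial with $\deg_j(v)>\nu_j(u)$ for some $j$, or carrying a variable outside the relevant blocks, admits a proper divisor still lying in $(u)^\alpha$.

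With this description in hand, the incomparability of $G(I)^\alpha$ is short. Suppose $w\in G((u_i)^\alpha)$, $w''\in G((u_j)^\alpha)$ and $w''$ divides $w$. Then $\deg_\ell(w'')\leq\deg_\ell(w)$ for every $\ell$, which by the block-degree description reads $\nu_\ell(u_j)\leq\nu_\ell(u_i)$ for every $\ell$; hence $u_j\mid u_i$. As $u_i,u_j$ both belong to the minimal generating set $G(I)$, this forces $u_i=u_j$, whence $\deg_\ell(w'')=\deg_\ell(w)$ for all $\ell$, and combined with $w''\mid w$ we conclude $w''=w$. Thus $G(I)^\alpha$ generates $I^\alpha$ and is pairwise incomparable, so it is the minimal generating set, giving $G(I^\alpha)=G(I)^\alpha$.

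I expect the main obstacle to be the block-degree characterization of the generators, specifically controlling the minimal generators of the product $P_1^{\nu_1(u)}\cdots P_n^{\nu_n(u)}$: one must argue that because the factors live in disjoint variable blocks there are no cancellations among the products of their generators, so that both membership and minimality can be tested one block at a time. Everything after that reduces to the elementary fact that $u_j\mid u_i$ together with the minimality of $G(I)$ forces equality.
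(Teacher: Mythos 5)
Your proof is correct. The paper states this lemma without proof (it is prefaced only by ``One can easily obtain the following lemma''), and your argument --- reducing membership and minimality in $(u)^\alpha=P_1^{\nu_1(u)}\cdots P_n^{\nu_n(u)}$ to blockwise degree counts, then showing $G(I)^\alpha$ is a generating antichain under divisibility and hence the minimal generating set --- is the natural way to fill that gap; every step, including the use of minimality of $G(I)$ to force $u_i=u_j$, checks out.
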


\begin{lem}
For a simple graph $\G$ on the vertex set $[n]$ and $\alpha\in\NN^n$ we have $I(\G^\alpha)=I(\G)^\alpha$.
\end{lem}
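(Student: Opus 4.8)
The plan is to compute both ideals through their minimal monomial generators and match them term by term. First I would pin down the edge set of $\G^\alpha$ explicitly. Starting from the successive-duplication definition, I would argue by induction on the number of duplications that duplicating a vertex produces a new vertex sharing its entire neighborhood while creating no edge between that vertex and its copy (since $\G$ has no loops). Consequently $x_{ip}x_{jq}\in E(\G^\alpha)$ if and only if $x_ix_j\in E(\G)$, for all copies $1\le p\le k_i$ and $1\le q\le k_j$, and no two copies of the same original vertex are joined. Thus $I(\G^\alpha)$ is minimally generated by the squarefree monomials $x_{ip}x_{jq}$ ranging over all edges $x_ix_j\in E(\G)$ and all copies.

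Next I would compute $I(\G)^\alpha$ directly from the definition of the expansion functor on monomial ideals. Since $I(\G)$ is minimally generated by the monomials $u=x_ix_j$ for $x_ix_j\in E(\G)$, and each such generator has $\nu_i(u)=\nu_j(u)=1$ with all other exponents zero (recall $i\ne j$ because simple graphs have no loops), its contribution to the sum defining $I(\G)^\alpha$ is the product $P_iP_j$. The ideal $P_iP_j=(x_{i1},\ldots,x_{ik_i})(x_{j1},\ldots,x_{jk_j})$ is generated exactly by the products $x_{ip}x_{jq}$ with $1\le p\le k_i$ and $1\le q\le k_j$. Summing over all edges yields the same generating set as for $I(\G^\alpha)$ found above, so $I(\G^\alpha)=I(\G)^\alpha$.

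I expect the main obstacle to be the first step: translating the recursive duplication definition of $\G^\alpha$ into the clean closed description of its edge set. One must check that duplications performed in any order, and for different vertices, interact correctly---in particular that each new copy inherits exactly the adjacencies of the vertex it duplicates and that copies of distinct adjacent vertices become completely joined, while copies of one vertex stay pairwise nonadjacent. An alternative route that sidesteps this bookkeeping is to observe that $I(\G)$ is the Stanley--Reisner ideal of the independence complex $\Del_\G$; applying Lemma \ref{epansion s-R} gives $I(\G)^\alpha=I_{\Del_\G}^\alpha=I_{(\Del_\G)^\alpha}$, after which it remains to identify $(\Del_\G)^\alpha$ with the independence complex $\Del_{\G^\alpha}$ and invoke Lemma \ref{epansion s-R} once more. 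Either way the crux is the combinatorial identity relating duplication of vertices to expansion of faces.
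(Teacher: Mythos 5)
Your proposal is correct and follows essentially the same route as the paper: both identify the edge set of $\G^\alpha$ as all pairs $x_{ip}x_{jq}$ with $x_ix_j\in E(\G)$ and then match $I(\G^\alpha)$ with the sum $\sum_{x_ix_j\in E(\G)}P_iP_j$ defining $I(\G)^\alpha$. The only cosmetic difference is that the paper delegates the formal identity $\sum P_iP_j=\bigl(\sum (x_i)(x_j)\bigr)^\alpha$ to Lemma 1.1(ii,iii) of \cite{BaHe}, while you compare minimal generating sets directly and spell out the duplication bookkeeping the paper leaves implicit.
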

\begin{proof}
Let $\alpha=(k_1,\ldots,k_n)$ and $P_j=(x_{j1},\ldots,x_{jk_j})$. Then it follows from Lemma 11(ii,iii) of \cite{BaHe} that
$$I(\G^\alpha)=(x_{ir}x_{js}:x_ix_j\in E(\G), 1\leq r\leq k_i,1\leq s\leq k_j)=\sum_{x_ix_j\in E(\G)}P_iP_j$$
$$=\sum_{x_ix_j\in E(\G)}(x_i)^\alpha (x_j)^\alpha=(\sum_{x_ix_j\in E(\G)}(x_i)(x_j))^\alpha=I(\G)^\alpha.$$
\end{proof}

\section{The expansion of a $\CM_t$ complex}

The following proposition gives us some information about the
expansion of a simplicial complex which are useful in the proof of
the next results.

\begin{prop}\label{ex indepen}
Let $\Del$ be a simplicial complex and let $\alpha\in\NN^n$.
\begin{enumerate}[\upshape (i)]
  \item For all $i\leq\dim(\Del)$, there exists an epimorphism $\theta:\tilde{H}_{i}(\Del^\alpha;K)\rightarrow\tilde{H}_{i}(\Del;K)$.

In particular in this case
$$\tilde{H}_{i}(\Del^\alpha;K)/\ker(\theta)\cong\tilde{H}_{i}(\Del;K);$$
  \item For $F\in\Del^\alpha$ such that $F=G^\alpha$ for some $G\in\Del$, we have
$$\lk_{\Del^\alpha}(F)=(\lk_\Del (G))^\alpha;$$
  \item For $F\in\Del^\alpha$ such that $F\neq G^\alpha$ for every $G\in\Del$, we have
$$\lk_{\Del^\alpha}F=\langle U^\alpha\backslash F\rangle\ast \lk_{\Del^\alpha}U^\alpha$$
for some $U\in\Del$ with $F\subseteq U^\alpha$. Here $\ast$ means the join of two simplicial complexes.

In the third case, $\lk_{\Del^\alpha}F$ is a cone and so acyclic,
i.e., $\tilde{H}_i(\lk_{\Del^\alpha}F;K)=0$ for all $i>0$.

\end{enumerate}
\end{prop}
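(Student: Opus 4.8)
The plan is to organize all three parts around the \emph{type map} $\pi\colon[n]^\alpha\to[n]$, $\pi(x_{ij})=x_i$, which forgets which copy of a vertex one is looking at. The single fact I would establish first is the combinatorial description of $\Del^\alpha$: a subset $F\subseteq[n]^\alpha$ is a face of $\Del^\alpha$ if and only if $\pi(F)\in\Del$. This is immediate from $\Del^\alpha=\langle F_1^\alpha,\ldots,F_r^\alpha\rangle$, since $F\subseteq F_i^\alpha$ precisely when every vertex of $F$ has its type in $F_i$, i.e.\ $\pi(F)\subseteq F_i$. Consequently $\pi$ is a simplicial map $\Del^\alpha\to\Del$, and choosing the first copy of each vertex, $s(x_i)=x_{i1}$, defines a simplicial map $s\colon\Del\to\Del^\alpha$ with $\pi\circ s=\mathrm{id}_\Del$.

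For (i) I would simply pass to reduced simplicial homology, which is functorial for simplicial maps, and set $\theta=\pi_*$. The identity $\pi\circ s=\mathrm{id}_\Del$ yields $\theta\circ s_*=\mathrm{id}$ on $\tilde H_i(\Del;K)$, so $\theta$ is a split epimorphism in every degree; the displayed isomorphism is then just the first isomorphism theorem. (For $i>\dim\Del$ the target is zero, which is why only the range $i\le\dim\Del$ carries content.) For (ii) I would compute the link directly: when $F=G^\alpha$, the condition $H\cup F\in\Del^\alpha$ reads $\pi(H)\cup G\in\Del$, and since $F$ contains \emph{all} copies of the vertices of $G$, the condition $H\cap F=\emptyset$ is equivalent to $\pi(H)\cap G=\emptyset$. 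Thus $H\in\lk_{\Del^\alpha}(F)$ exactly when $\pi(H)\in\lk_\Del(G)$ and $H$ uses only types outside $G$, which is the defining condition for membership in $(\lk_\Del(G))^\alpha$.

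For (iii) the key is to set $U=\pi(F)\in\Del$, so that $F\subseteq U^\alpha$ and in fact $F\subsetneq U^\alpha$ (equality would force $F=U^\alpha$, against the hypothesis). I would then split an arbitrary $H\in\lk_{\Del^\alpha}(F)$ as $H=H_1\cup H_2$ with $H_1=H\cap U^\alpha$ and $H_2=H\setminus U^\alpha$. Here $H_1\subseteq U^\alpha\setminus F$, while, using $\pi(H_1)\subseteq U$, the link conditions on $H$ collapse to $\pi(H_2)\cap U=\emptyset$ and $\pi(H_2)\cup U\in\Del$, i.e.\ $H_2\in(\lk_\Del(U))^\alpha=\lk_{\Del^\alpha}(U^\alpha)$ by part (ii). Conversely, for $A\subseteq U^\alpha\setminus F$ and $B\in\lk_{\Del^\alpha}(U^\alpha)$ one checks $A\cup B\in\lk_{\Del^\alpha}(F)$, the only point needing attention being $\pi(A\cup B\cup F)=\pi(B)\cup U\in\Del$. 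As $A$ and $B$ carry disjoint type sets they are joinable, giving $\lk_{\Del^\alpha}(F)=\langle U^\alpha\setminus F\rangle\ast\lk_{\Del^\alpha}(U^\alpha)$. Since $U^\alpha\setminus F\neq\emptyset$, the factor $\langle U^\alpha\setminus F\rangle$ is a nonempty full simplex, hence a cone; a join with a cone is a cone, so $\lk_{\Del^\alpha}F$ is acyclic.

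The routine pieces are (i) and (ii); I expect the main obstacle to be the bookkeeping in (iii): correctly pinning down $U=\pi(F)$, justifying that the decomposition $H=H_1\cup H_2$ is compatible with the link conditions, and verifying that those conditions \emph{decouple} into a free part on $U^\alpha\setminus F$ and a genuine link $\lk_{\Del^\alpha}(U^\alpha)$. This decoupling is exactly where part (ii) has to be fed back in, so the parts must be proved in the order (ii) before (iii).
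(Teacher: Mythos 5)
Your proposal is correct and follows essentially the same route as the paper: the type map $\pi(x_{ij})=x_i$ drives all three parts, part (ii) is a direct two-inclusion computation, and part (iii) uses the same choice $U=\pi(F)$ with the same decomposition of a link face into its part inside and outside $U^\alpha$. The only refinement is in (i), where your section $s(x_i)=x_{i1}$ with $\pi\circ s=\mathrm{id}$ gives a clean split-epimorphism argument for surjectivity, which the paper merely asserts as trivial after writing out the chain map by hand.
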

\begin{proof}
(i) Consider the map $\pi:[n]^\alpha\rightarrow [n]$ by $\pi(x_{ij})=x_i$ for all $i,j$. Let the simplicial map $\varphi:\Del^\alpha\rightarrow\Del$ be defined by $\varphi(\{x_{i_1j_1},\ldots,x_{i_qj_q}\})=\{\pi(x_{i_1j_1}),\ldots,\pi(x_{i_qj_q})\}=\{x_{i_1},\ldots,x_{i_q}\}$. Actually, $\varphi$ is an extension of $\pi$ to $\Del^\alpha$ by linearity. Define $\varphi_\#:\tilde{\C}_q(\Del^\alpha;K)\rightarrow\tilde{\C}_q(\Del;K)$, for each $q$, by
$$\varphi_\#([x_{i_0j_0},\ldots,x_{i_qj_q}])=\left\{
  \begin{array}{ll}
    0 & \mbox{if for some indices}\ i_r=i_t \\
    \left[\varphi(\{x_{i_0j_0}\}),\ldots,\varphi(\{x_{i_qj_q}\})\right] & \mbox{otherwise}.
  \end{array}
\right.
$$
It is clear from the definitions of $\tilde{\C}_q(\Del^\alpha;K)$ and $\tilde{\C}_q(\Del;K)$ that $\varphi_\#$ is well-defined. Also, define $\varphi_\alpha:\tilde{H}_{i}(\Del^\alpha;K)\rightarrow\tilde{H}_{i}(\Del;K)$ by
$$\varphi_\alpha:z+B_i(\Del^\alpha)\rightarrow \varphi_\#(z)+B_i(\Del).$$
It is trivial that $\varphi_\alpha$ is onto.

(ii) The inclusion $\lk_{\Del^\alpha}(F)\supseteq(\lk_\Del (G))^\alpha$ is trivial. So we show the reverse inclusion. Let $\sigma\in\lk_{\Del^\alpha}(G^\alpha)$. Then $\sigma\cap G^\alpha=\emptyset$ and $\sigma\cup G^\alpha\in\Del^\alpha$. We want to show $\pi(\sigma)\in\lk_\Del (G)$. Because in this case, $\pi(\sigma)^\alpha\in(\lk_\Del (G))^\alpha$ and since that $\sigma\subseteq \pi(\sigma)^\alpha$, we can conclude that $\sigma\in (\lk_\Del (G))^\alpha$.

Clearly, $\pi(\sigma)\cup G\in\Del$. To show that $\pi(\sigma)\cap G=\emptyset$, suppose, on the contrary, that $x_i\in \pi(\sigma)\cap G$. Then $x_{ij}\in \sigma$ for some $j$. Especially, $x_{ij}\in G^\alpha$. Therefore $\sigma\cap G^\alpha\neq\emptyset$, a contradiction.

(iii) Let $\tau\in\lk_{\Del^\alpha}F$. Let $\tau\cap \pi(F)^\alpha=\emptyset$. It follows from $\tau\cup F\in \Del^\alpha$ that $\pi(\tau)^\alpha\cup\pi(F)^\alpha\in \Del^\alpha$. Now by $\tau\subset\pi(\tau)^\alpha$ it follows that $\tau\cup\pi(F)^\alpha\in \Del^\alpha$. Hence $\tau\in\lk_{\Del^\alpha}(\pi(F)^\alpha)$. So we suppose that $\tau\cap \pi(F)^\alpha\neq\emptyset$. We write
$\tau=(\tau\cap \pi(F)^\alpha)\cup (\tau\backslash \pi(F)^\alpha)$. It is clear that $\tau\cap \pi(F)^\alpha\subset \pi(F)^\alpha\backslash F$ and $\tau\backslash \pi(F)^\alpha\in\lk_{\Del^\alpha}\pi(F)^\alpha$. The reverse inclusion is trivial.
\end{proof}

\begin{rem}\label{pure expan}
Let $\Del=\langle x_1x_2,x_2x_3\rangle$ be a complex on $[3]$ and
$\alpha=(2,1,1)\in\NN^3$. Then $\Del^\alpha=\langle
x_{11}x_{12}x_{21},x_{21}x_{31}\rangle$ is a complex on
$\{x_{11},x_{12},x_{21},x_{31}\}$. Notice that $\Del$ is pure but
$\Del^\alpha$ is not. Therefore, the expansion of a pure simplicial
complex is not necessarily pure.
\end{rem}

\begin{thm}\label{main}
Let $\Del$ be a simplicial complex on $[n]$ of dimension $d-1$ and
let $t\geq 0$ be the least integer that $\Del$ is $\CM_t$. Suppose
that $\alpha=(k_1,\ldots,k_n)\in\NN^n$ such that $k_i>1$ for some
$i$ and $\Del^\alpha$ is pure. Then $\Del^\alpha$ is $\CM_{t+e-k+1}$
but it is not $\CM_{t+e-k}$, where $e=\dim(\Del^\alpha)+1$ and
$k=\min\{k_i:k_i>1\}$ .
\end{thm}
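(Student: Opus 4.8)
The plan is to establish the two assertions separately, using Lemma~\ref{CM-t eq} to reduce the $\CM_t$ property to a statement about links of vertices, and using the structural description of links in $\Del^\alpha$ furnished by Proposition~\ref{ex indepen}. First I would record the relationship between the relevant parameters. Since $\Del$ has dimension $d-1$ and $\alpha$ has minimum nontrivial entry $k=\min\{k_i:k_i>1\}$, the expansion inflates a top-dimensional face $F$ of $\Del$ into $F^\alpha$, whose cardinality is $\sum_{x_i\in F}k_i$; hence $e=\dim(\Del^\alpha)+1$ is the maximal such sum over facets of $\Del$. The key numerical point to extract is how the codimension shifts: passing from $\Del$ to $\Del^\alpha$ replaces each vertex by a cluster of $k_i$ vertices, and the ``worst'' link behavior occurs along the smallest clusters, which is why $k$ enters the formula.

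For the positive statement ($\Del^\alpha$ is $\CM_{t+e-k+1}$), the natural approach is to verify the Cohen--Macaulay condition on $\lk_{\Del^\alpha}(F)$ for every face $F$ with $\#(F)\geq t+e-k+1$, splitting into the two cases of Proposition~\ref{ex indepen}(ii) and (iii). In case (iii), where $F\neq G^\alpha$ for any $G\in\Del$, the link is a cone, hence acyclic, and a cone over any complex is Cohen--Macaulay, so these faces impose no obstruction. In case (ii), where $F=G^\alpha$ for some $G\in\Del$, Proposition~\ref{ex indepen}(ii) gives $\lk_{\Del^\alpha}(F)=(\lk_\Del(G))^\alpha$, reducing the problem to showing that an expansion of the form $(\lk_\Del(G))^\alpha$ is Cohen--Macaulay. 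The cardinality bound $\#(F)=\#(G^\alpha)=\sum_{x_i\in G}k_i\geq t+e-k+1$ should, after accounting for the dimension of $\lk_\Del(G)$ inside $\Del^\alpha$, force $\#(G)\geq t$, so that $\lk_\Del(G)$ is Cohen--Macaulay because $\Del$ is $\CM_t$; then one argues that the expansion of a Cohen--Macaulay complex is again Cohen--Macaulay (the case $t=0$ of the theorem, which can be proved directly via Reisner's criterion together with Proposition~\ref{ex indepen}(i), since the reduced homology of the expansion and of all its links vanishes in the right degrees). I expect the careful bookkeeping linking $\#(F)$, $\#(G)$, $e$, and $k$ to be the main obstacle: one must verify that the threshold $t+e-k+1$ is exactly what guarantees $\#(G)\geq t$ uniformly over all relevant $G$, using that the smallest inflation factor is $k$.

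For the negative statement ($\Del^\alpha$ is not $\CM_{t+e-k}$), the idea is to exhibit a face $F$ with $\#(F)=t+e-k$ whose link fails to be Cohen--Macaulay. Since $t$ is the \emph{least} integer for which $\Del$ is $\CM_t$, there exists a face $G_0\in\Del$ with $\#(G_0)=t-1$ such that $\lk_\Del(G_0)$ is not Cohen--Macaulay (otherwise $\Del$ would already be $\CM_{t-1}$). I would then build $F$ from $G_0^\alpha$ by removing vertices from the clusters so as to land on the cardinality $t+e-k$, arranging things so that the resulting link is, up to a cone factor that does not affect Cohen--Macaulay-ness, an expansion of $\lk_\Del(G_0)$. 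Using Proposition~\ref{ex indepen}(i), the epimorphism $\tilde{H}_i(\Lambda^\alpha;K)\twoheadrightarrow\tilde{H}_i(\Lambda;K)$ transports the nonvanishing homology of $\lk_\Del(G_0)$ (which obstructs its Cohen--Macaulay-ness via Reisner) up to $(\lk_\Del(G_0))^\alpha$, showing the latter is not Cohen--Macaulay either. The delicate point here is choosing exactly which cluster vertices to delete: because $k$ is the minimum inflation factor, selecting a vertex $x_i\in G_0$ with $k_i=k$ and removing the appropriate number of its duplicates is what produces the precise codimension shift of $e-k$, matching the sharpness claim; verifying that the homology survives this deletion (i.e. that the relevant link is genuinely an expansion and not accidentally a cone) is where the argument must be handled with the most care.
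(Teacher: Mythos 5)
Your plan for the positive half rests on two claims that are both false, and they sit exactly where the difficulty of the theorem lives. First, in case (iii) you dismiss the faces whose link is a cone on the grounds that ``a cone over any complex is Cohen--Macaulay.'' Acyclicity is not Cohen--Macaulayness: Reisner's criterion also requires the links of faces \emph{inside} the cone to have vanishing homology, and $\langle U^\alpha\backslash F\rangle\ast \lk_{\Del^\alpha}(U^\alpha)$ is Cohen--Macaulay only if the factor $\lk_{\Del^\alpha}(U^\alpha)=(\lk_\Del(U))^\alpha$ is (the cone over two disjoint edges, for example, is not Cohen--Macaulay). Second, in case (ii) you reduce to the assertion that the expansion of a Cohen--Macaulay complex is again Cohen--Macaulay. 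This is the opposite of the truth: the $t=0$ instance of the very theorem you are proving says the expansion is $\CM_{e-k+1}$ but \emph{not} $\CM_{e-k}$, and the corollary immediately following it states that for $\alpha\neq\1$ the expansion of a nonempty Cohen--Macaulay complex is never Cohen--Macaulay. Your argument would conclude that $\Del^\alpha$ is essentially $\CM_t$, which is false; the entire content of the shift $e-k+1$ is that expansion degrades the Cohen--Macaulayness of links by a controlled, nonzero amount. The paper avoids any step of the form ``this link is CM'': it inducts on $e=\dim(\Del^\alpha)+1$, reduces via Lemma \ref{CM-t eq} to vertex links, writes each vertex link as a join of a simplex with a smaller expansion $(\lk_{\Del}(x_i))^\alpha$ via Proposition \ref{ex indepen}, applies the inductive hypothesis to get the exact $\CM$ level of that factor, and then uses the join theorems of \cite{HaYaZa2} to transfer that level to the link.

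The negative half of your plan is closer in spirit to the paper's (both aim to exhibit a face whose link has surviving homology, using minimality of $t$), but it too has a gap: your witness $G_0$ with $\#(G_0)=t-1$ does not exist when $t=0$, and that is precisely the case in which the failure of $\CM_{e-k}$ must come from the expansion itself (e.g.\ disconnected links $(\lk_\Del(U))^\alpha$ with $\lk_\Del(U)$ zero-dimensional) rather than from any non-Cohen--Macaulay link already present in $\Del$. The paper handles this uniformly inside the same induction, descending through a vertex $x_i$ lying in a facet that contains some $x_s$ with $k_s=k$, with base case $\dim(\Del)=0$.
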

\begin{proof}
We use induction on $e\geq 2$. If $e=2$, then $\dim(\Del^\alpha)=1$ and $\Del$ should be only in form $\Del=\langle x_1,\ldots,x_n\rangle$. In particular, $\Del^\alpha$ is of the form
$$\Del^\alpha=\langle \{x_{i_11},x_{i_12}\},\{x_{i_21},x_{i_22}\},\ldots,\{x_{i_r1},x_{i_r2}\}\rangle.$$
It is clear that $\Del^\alpha$ is $\CM_1$ but it is not Cohen-Macaulay.

Assume that $e>2$. Let $\{x_{ij}\}\in\Del^\alpha$. We want to show that $\lk_{\Del^\alpha}(x_{ij})$ is $\CM_{e-k}$. Consider the following cases:

Case 1: $k_i>1$. Then
$$\lk_{\Del^\alpha}(x_{ij})=\langle\{x_i\}^\alpha\backslash x_{ij}\rangle\ast(\lk_{\Del}(x_i))^\alpha.$$
$(\lk_{\Del}(x_i))^\alpha$ is of dimension $e-k_i-1$ and, by induction hypothesis, it is $\CM_{t+e-k_i-k+1}$. On the other hand, $\langle\{x_i\}^\alpha\backslash x_{ij}\rangle$ is Cohen-Macaulay of dimension $k_i-2$. Therefore, it follows from Theorem 1.1(i) of \cite{HaYaZa2} that $\lk_{\Del^\alpha}(x_{ij})$ is $\CM_{t+e-k}$.

Case 2: $k_i=1$. Then
$$\lk_{\Del^\alpha}(x_{ij})=(\lk_{\Del}(x_i))^\alpha$$
which is of dimension $e-2$ and, by induction, it is $\CM_{t+e-k}$.

Now suppose that $e>2$ and $k_s=k$ for some $s\in [n]$. Let $F$ be a facet of $\Del$ such that $x_s$ belongs to $F$.

If $\dim(\Del)=0$, then $k_l=k$ for all $l\in [n]$. In particular, $e=k$. It is clear that $\Del^\alpha$ is not $\CM_{t+e-k}$ (or Cohen-Macaulay). So suppose that $\dim(\Del)>0$. Choose $x_i\in F\backslash x_s$. Then
$$\lk_{\Del^\alpha}(x_{ij})=\langle \{x_i\}^\alpha\backslash x_{ij}\rangle\ast (\lk_\Del(x_i))^\alpha.$$
By induction hypothesis, $(\lk_\Del(x_i))^\alpha$ is not $\CM_{t+e-k_i-k}$. It follows from Theorem 3.1(ii) of \cite{HaYaZa2} that $\lk_{\Del^\alpha}(x_{ij})$ is not $\CM_{t+e-k-1}$. Therefore $\Del^\alpha$ is not $\CM_{t+e-k}$.
\end{proof}

\begin{cor}
Let $\Del$ be a non-empty Cohen-Macaulay simplicial complex on
$[n]$. Then for any $\alpha\in\NN^n$, with $\alpha\neq\1$,
$\Del^\alpha$ can never be Cohen-Macaulay.
\end{cor}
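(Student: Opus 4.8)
The plan is to read the statement off Theorem \ref{main} once two preliminary points are settled: the non-pure case must be disposed of by hand, and the codimension index coming out of that theorem must be shown to be non-negative. First I would note that a Cohen--Macaulay complex is pure, so if $\Del^\alpha$ is not pure it cannot be Cohen--Macaulay and there is nothing to prove. Hence I may assume $\Del^\alpha$ is pure, which is precisely the hypothesis under which Theorem \ref{main} applies.

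Since $\Del$ is Cohen--Macaulay it is $\CM_0$, and because $\CM_0$ is the strongest property in the hierarchy the least integer $t$ for which $\Del$ is $\CM_t$ equals $0$. Moreover $\alpha\neq\1$ forces $k_i>1$ for at least one $i$, so the hypotheses of Theorem \ref{main} are in force. Applying it with $t=0$ yields that $\Del^\alpha$ is \emph{not} $\CM_{e-k}$, where $e=\dim(\Del^\alpha)+1$ and $k=\min\{k_i:k_i>1\}$.

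It then remains to check that $e-k\geq 0$, for this is what converts ``not $\CM_{e-k}$'' into ``not $\CM_0$''. Choosing an index $s$ with $k_s=k$, the vertex $x_s$ gives a face $\{x_s\}^\alpha=\{x_{s1},\ldots,x_{sk}\}$ of $\Del^\alpha$ with exactly $k$ vertices, so $\dim(\Del^\alpha)\geq k-1$ and thus $e\geq k$. Since every $\CM_0$ complex is $\CM_r$ for all $r\geq 0$, a Cohen--Macaulay $\Del^\alpha$ would in particular be $\CM_{e-k}$; as Theorem \ref{main} says it is not, $\Del^\alpha$ cannot be Cohen--Macaulay. The one step needing care is exactly the bound $e\geq k$: it guarantees that the codimension at which Cohen--Macaulayness provably fails is a legitimate non-negative index, so that the failure is not absorbed into the convention $\CM_t=\CM_0$ for $t<0$; everything else is routine bookkeeping with the $\CM_t$ hierarchy.
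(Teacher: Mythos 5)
Your proof is correct and is exactly the intended derivation: the paper states this corollary without proof as an immediate consequence of Theorem \ref{main} applied with $t=0$. You have merely made explicit the two points the paper leaves implicit, namely that the non-pure case is trivial and that $e\geq k$ (so that ``not $\CM_{e-k}$'' is not vacuous under the convention $\CM_t=\CM_0$ for $t<0$), both of which you verify correctly.
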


\section{The contraction functor}

Let $\Del=\langle F_1,\ldots,F_r\rangle$ be a simplicial complex on $[n]$. Consider the equivalence relation `$\sim$' on the vertices of $\Del$ given by
$$x_i\sim x_j\Leftrightarrow\langle x_i\rangle\ast\lk_\Del(x_i)=\langle x_j\rangle\ast\lk_\Del(x_j).$$
In fact $\langle x_i\rangle\ast\lk_\Del(x_i)$ is the cone over
$\lk_\Del(x_i)$, and the elements of
$\langle x_i\rangle\ast\lk_\Del(x_i)$ are those faces of $\Delta$,
which contain $x_i$. Hence  $\langle x_i\rangle\ast\lk_\Del(x_i)
=\langle x_j\rangle\ast\lk_\Del(x_j)$, means the cone with vertex $x_i$ is equal to the cone with vertex $x_j$. In other words,
$x_i\sim x_j$ is equivalent to saying that for a facet $F\in\Delta$, $F$ contains $x_i$ if and only if it contains $x_j$.

Let $[\bar{m}]=\{\bar{y}_1,\ldots,\bar{y}_m\}$ be the set of
equivalence classes under $\sim$. Let
$\bar{y}_i=\{x_{i1},\ldots,x_{ia_i}\}$. Set
$\alpha=(a_1,\ldots,a_m)$. For $F_t\in\Del$, define
$G_t=\{\bar{y}_i:\bar{y}_i\subset F_t\}$ and let $\Gamma$ be a
simplicial complex on the vertex set $[m]$ with facets
$G_1,\ldots,G_r$. We call $\Gamma$  the \emph{contraction of $\Del$
by $\alpha$} and $\alpha$ is called \emph{the vector obtained from
contraction}.

For example, consider the simplicial complex $\Del=\langle x_1x_2x_3,x_2x_3x_4,x_1x_4x_5,x_2x_3x_5\rangle$
on the vertex set $[5]=\{x_1,\ldots,x_5\}$. Then $\bar{y}_1=\{x_1\}$, $\bar{y}_2=\{x_2,x_3\}$, $\bar{y}_3=\{x_4\}$,
$\bar{y}_4=\{x_5\}$ and $\alpha=(1,2,1,1)$. Therefore, the contraction of $\Del$ by $\alpha$ is
$\Gamma=\langle \bar{y}_1\bar{y}_2,\bar{y}_2\bar{y}_3,\bar{y}_1\bar{y}_3\bar{y}_4,\bar{y}_2\bar{y}_4\rangle$ a
complex on the vertex set $[\bar{4}]=\{\bar{y}_1,\ldots,\bar{y}_4\}$.

\begin{rem}
Note that if $\Del$ is a pure simplicial complex then the contraction of $\Del$ is not necessarily pure (see the above example). In the special case where the vector $\alpha=(k_1,\dots,k_n)\in\NN^n$ and $k_i=k_j$ for all $i,j$, it is easy to check that in this case $\Del$ is pure if and only if $\Del^\alpha$ is pure. Another case is introduced in the following proposition.
\end{rem}

\begin{prop}
Let $\Del$ be a simplicial complex on $[n]$ and assume that
$\alpha=(k_1,\dots,k_n)\in\NN^n$ satisfies the following condition:

$(\dag)$ for all facets $F,G\in\Del$, if $x_i\in F\backslash G$ and $x_j\in G\backslash F$ then $k_i=k_j$.

Then $\Del$ is pure if and only if $\Del^\alpha$ is pure.
\end{prop}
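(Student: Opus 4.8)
The plan is to translate each of the two purity conditions into a purely numerical statement about the facets of $\Del$, and then to observe that condition $(\dag)$ makes those two numerical conditions coincide. First I would pin down the facets of $\Del^\alpha$. Since every $k_i\geq 1$, the assignment $F\mapsto F^\alpha$ both preserves and reflects inclusion: if $x_i\in F$ then $x_{i1}\in F^\alpha$, so $F^\alpha\subseteq G^\alpha$ holds exactly when $F\subseteq G$. As the facets $F_1,\dots,F_r$ of $\Del$ are pairwise incomparable, so are $F_1^\alpha,\dots,F_r^\alpha$, and hence the facets of $\Del^\alpha=\langle F_1^\alpha,\dots,F_r^\alpha\rangle$ are precisely the expansions of the facets of $\Del$. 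Moreover $|F^\alpha|=\sum_{x_i\in F}k_i$, so $\dim F^\alpha=\big(\sum_{x_i\in F}k_i\big)-1$. Writing $w(F):=\sum_{x_i\in F}k_i$, this shows that $\Del^\alpha$ is pure if and only if $w(F)$ is constant over the facets $F$ of $\Del$, while $\Del$ is pure if and only if $|F|$ is constant over those same facets.

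Next I would compare two distinct facets $F,G$ of $\Del$ directly. Put $A=F\setminus G$ and $B=G\setminus F$; since $F$ and $G$ are facets, neither contains the other, so $A$ and $B$ are both nonempty. By $(\dag)$ every exponent $k_i$ with $x_i\in A$ equals every exponent $k_j$ with $x_j\in B$, and chaining these equalities shows that all these exponents share one common value $c=c(F,G)\geq 1$. Therefore
$$w(F)-w(G)=\sum_{x_i\in A}k_i-\sum_{x_j\in B}k_j=c\big(|A|-|B|\big),\qquad\text{while}\qquad |F|-|G|=|A|-|B|.$$
Because $c\geq 1$, the left identity gives $w(F)=w(G)$ if and only if $|A|=|B|$, i.e. if and only if $|F|=|G|$. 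Running this equivalence over every pair of facets yields that $w$ is constant on the facets exactly when $|\cdot|$ is constant on them, which by the first step is exactly the assertion that $\Del^\alpha$ is pure if and only if $\Del$ is pure.

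I expect the only genuine content to be the observation that $(\dag)$ collapses all the symmetric-difference exponents to a single value $c$, after which that factor $c$ cancels and the two purity conditions become literally the same linear condition on the facets. The points needing a little care are the facet correspondence for $\Del^\alpha$ (so that purity need only be tested on expanded facets) and the degenerate situation where $A$ or $B$ is empty; the latter cannot occur for distinct facets by maximality, and in any event positivity of the $k_i$ forces $w(F)=w(G)$ together with $F=G$ there. Nothing beyond this bookkeeping and the one-line cancellation seems to be required.
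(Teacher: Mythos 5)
Your proposal is correct and follows essentially the same route as the paper: both reduce the comparison of $|F^\alpha|$ and $|G^\alpha|$ to the sums of the $k_i$ over $F\setminus G$ and $G\setminus F$ and use $(\dag)$ to identify those exponents. You are somewhat more careful than the paper in justifying that the facets of $\Del^\alpha$ are exactly the expansions of the facets of $\Del$ and in packaging both directions as a single cancellation of the common value $c$, but the underlying argument is the same.
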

\begin{proof}
Let $\Del$ be a pure simplicial complex and let $F,G\in\Del$ be two facets of $\Del$. Then $$|F^\alpha|-|G^\alpha|=\sum_{x_i\in F}k_i-\sum_{x_i\in G}k_i=\sum_{x_i\in F\backslash G}k_i-\sum_{x_i\in G\backslash F}k_i.$$
Now the condition $(\dag)$ implies that $|F^\alpha|=|G^\alpha|$. This means that all facets of $\Del^\alpha$ have the same cardinality.

Let $\Del^\alpha$ be pure. Suppose that $F,G$ are two facets in
$\Del$. If $|F|>|G|$ then $|F\backslash G|>|G\backslash F|$.
Therefore $\sum_{x_i\in F\backslash G}k_i>\sum_{x_i\in G\backslash
F}k_i$. This concludes that $|F^\alpha|=\sum_{x_i\in
F}k_i>\sum_{x_i\in G}k_i=|G^\alpha|$, a contradiction.
\end{proof}

There is a close relationship between a simplicial complex and its contraction. In fact, the expansion of the contraction of a simplicial complex is the same complex. The precise statement is the following.

\begin{lem}
Let $\Gamma$ be the contraction of $\Del$ by $\alpha$. Then $\Gamma^\alpha\cong \Del$.
\end{lem}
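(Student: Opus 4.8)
The plan is to construct an explicit bijection on vertex sets and then verify that it carries the facets of $\Gamma^\alpha$ bijectively onto those of $\Del$, which is exactly what an isomorphism of simplicial complexes requires. Recall that the vertex set of $\Gamma^\alpha$ is $[m]^\alpha=\{\bar{y}_{11},\ldots,\bar{y}_{1a_1},\ldots,\bar{y}_{m1},\ldots,\bar{y}_{ma_m}\}$, obtained by expanding each class-vertex $\bar{y}_i$ of $\Gamma$ into $a_i$ copies. Since the classes $\bar{y}_1,\ldots,\bar{y}_m$ partition $[n]$ and $\bar{y}_i=\{x_{i1},\ldots,x_{ia_i}\}$, the assignment $\psi(\bar{y}_{ij})=x_{ij}$ is a well-defined map $\psi:[m]^\alpha\to[n]$, and it is a bijection because $\#([m]^\alpha)=\sum_i a_i=n$ and the indices $x_{ij}$ exhaust $[n]$ without repetition.

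The heart of the argument is the observation that each facet of $\Del$ is a union of \emph{entire} equivalence classes. Indeed, by the very definition of $\sim$, the relation $x_i\sim x_j$ holds precisely when a facet of $\Del$ contains $x_i$ if and only if it contains $x_j$; hence if a facet $F$ meets a class $\bar{y}_i$, then it must contain all of $\bar{y}_i$. Consequently, for each facet $F_t$ we have $F_t=\bigcup_{\bar{y}_i\subset F_t}\bar{y}_i=\bigcup_{\bar{y}_i\in G_t}\bar{y}_i$, where $G_t=\{\bar{y}_i:\bar{y}_i\subset F_t\}$ is the associated facet of $\Gamma$. This is the only step that uses a nontrivial property of the contraction, and it is essentially forced by how $\sim$ was set up.

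With these two facts in place the verification is immediate. Unwinding the definition of the expansion functor applied to $\Gamma$ with respect to $\alpha=(a_1,\ldots,a_m)$ gives $G_t^\alpha=\{\bar{y}_{ij}:\bar{y}_i\in G_t,\ 1\leq j\leq a_i\}$, and applying $\psi$ yields $\psi(G_t^\alpha)=\bigcup_{\bar{y}_i\in G_t}\{x_{i1},\ldots,x_{ia_i}\}=\bigcup_{\bar{y}_i\in G_t}\bar{y}_i=F_t$. Thus $\psi$ sends the generating facets $G_1^\alpha,\ldots,G_r^\alpha$ of $\Gamma^\alpha$ onto the facets $F_1,\ldots,F_r$ of $\Del$; moreover, since distinct facets $F_s\neq F_t$ are distinct unions of classes and therefore arise from distinct $G_s,G_t$, this correspondence is a bijection. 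Hence the vertex bijection $\psi$ induces the desired isomorphism $\Gamma^\alpha\cong\Del$. The only point requiring genuine care is the union-of-classes property of facets; once it is recorded, the rest is bookkeeping, so I do not expect a serious obstacle here.
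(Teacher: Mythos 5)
Your proof is correct and follows essentially the same route as the paper: both define the vertex bijection $\bar{y}_{ij}\mapsto x_{ij}$ and extend it to an isomorphism $\Gamma^\alpha\to\Del$. The only difference is that you explicitly verify the key point the paper leaves implicit --- that every facet of $\Del$ is a union of entire equivalence classes, so that the map carries $G_t^\alpha$ exactly onto $F_t$ --- which is a welcome addition rather than a deviation.
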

\begin{proof}
Suppose that $\Del$ and $\Gamma$ are on the vertex sets $[n]=\{x_1,\ldots,x_n\}$ and $[\bar{m}]=\{\bar{y}_1,\ldots,\bar{y}_m\}$, respectively. Let $\alpha=(a_1,\ldots,a_m)$. For $\bar{y}_i\in\Gamma$, suppose that $\{\bar{y}_i\}^\alpha=\{\bar{y}_{i1},\ldots,\bar{y}_{ia_i}\}$. So $\Gamma^\alpha$ is a simplicial complex on the vertex set $[\bar{m}]^\alpha=\{\bar{y}_{ij}:i=1,\ldots,m,\ j=1,\ldots,a_i\}$. Now define $\varphi:[\bar{m}]^\alpha\rightarrow [n]$ by $\varphi(\bar{y}_{ij})=x_{ij}$. Extending $\varphi$, we obtain the isomorphism $\varphi:\Gamma^\alpha\rightarrow \Del$.
\end{proof}

\begin{prop}\label{CM indepen}
Let $\Del$ be a simplicial complex and assume that $\Del^\alpha$ is
Cohen-Macaulay for some $\alpha\in\NN^n$. Then $\Del$ is
Cohen-Macaulay.
\end{prop}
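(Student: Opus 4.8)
The plan is to verify the Reisner criterion for $\Del$ by pushing a hypothetical homological obstruction up to $\Del^\alpha$, where the Cohen-Macaulay hypothesis forbids it. Recall that over $K$ a complex $\Sigma$ is Cohen-Macaulay exactly when $\tilde{H}_i(\lk_\Sigma(G);K)=0$ for every face $G\in\Sigma$ and every $i<\dim\lk_\Sigma(G)$. I would argue by contraposition: assuming $\Del$ fails this vanishing, I would exhibit a face and a degree at which $\Del^\alpha$ also fails it, contradicting that $\Del^\alpha$ is Cohen-Macaulay.

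Concretely, suppose $\Del$ is not Cohen-Macaulay, so there are a face $G\in\Del$ and an index $i<\dim\lk_\Del(G)$ with $\tilde{H}_i(\lk_\Del(G);K)\neq 0$. Applying Proposition \ref{ex indepen}(i) to the complex $\lk_\Del(G)$ (with $\alpha$ restricted to its vertices), and using $i\leq\dim\lk_\Del(G)$, there is an epimorphism
$$\tilde{H}_i\big((\lk_\Del(G))^\alpha;K\big)\longrightarrow\tilde{H}_i(\lk_\Del(G);K).$$
Since the target is nonzero and the map is surjective, the source is nonzero as well. Because $G\subseteq F$ for some facet $F$ gives $G^\alpha\subseteq F^\alpha\in\Del^\alpha$, the set $G^\alpha$ is a face of $\Del^\alpha$, and Proposition \ref{ex indepen}(ii) identifies $(\lk_\Del(G))^\alpha=\lk_{\Del^\alpha}(G^\alpha)$. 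Hence $\tilde{H}_i(\lk_{\Del^\alpha}(G^\alpha);K)\neq 0$.

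It remains to control the degree relative to the dimension of the new link. Since every entry of $\alpha$ is at least $1$, expansion cannot lower dimension: for any complex $\Sigma$ and any facet $F$ one has $|F^\alpha|=\sum_{x_j\in F}k_j\geq|F|$, so $\dim\Sigma^\alpha\geq\dim\Sigma$. Applying this to $\Sigma=\lk_\Del(G)$ yields
$$i<\dim\lk_\Del(G)\leq\dim(\lk_\Del(G))^\alpha=\dim\lk_{\Del^\alpha}(G^\alpha).$$
Thus $\tilde{H}_i(\lk_{\Del^\alpha}(G^\alpha);K)$ is a nonzero reduced homology group in a degree strictly below the dimension of the link, contradicting the Reisner criterion for the Cohen-Macaulay complex $\Del^\alpha$. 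Therefore $\Del$ is Cohen-Macaulay.

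The substance of the argument is carried entirely by Proposition \ref{ex indepen}, so the remaining points are bookkeeping rather than genuine obstacles: one must confirm that part (i) is legitimately invoked on the link complex in the precise degree $i\leq\dim\lk_\Del(G)$, and that the dimension inequality coming from expansion is strong enough to keep $i$ strictly below $\dim\lk_{\Del^\alpha}(G^\alpha)$. No separate purity hypothesis on $\Del$ is needed, since purity is already subsumed in the Reisner vanishing that the argument establishes.
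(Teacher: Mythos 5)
Your proof is correct and follows essentially the same route as the paper's: both arguments combine Proposition~\ref{ex indepen}(i) (the surjection on reduced homology) with Proposition~\ref{ex indepen}(ii) (identifying $(\lk_\Del(G))^\alpha$ with $\lk_{\Del^\alpha}(G^\alpha)$) and the observation that expansion does not lower the dimension of the link, so that Reisner vanishing for $\Del^\alpha$ forces it for $\Del$. The only difference is that you phrase it contrapositively and spell out the bookkeeping (why $G^\alpha$ is a face, the dimension inequality) that the paper leaves implicit.
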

\begin{proof}
By Lemma \ref{ex indepen}(i), for all $i\leq \dim(\lk_\Del F)$ and all $F\in\Del$ there exists an epimorphism $\theta:\lk_{\Del^\alpha} F^\alpha\rightarrow \lk_\Del F$ such that
$$\tilde{H}_{i}(\lk_{\Del^\alpha} F^\alpha;K)/\ker(\theta)\cong\tilde{H}_{i}(\lk_\Del F;K).$$
Now suppose that $i<\dim(\lk_\Del F)$. Then $i<\dim(\lk_{\Del^\alpha} G^\alpha)$ and by Cohen-Macaulayness of $\Del^\alpha$, $\tilde{H}_{i}(\lk_{\Del^\alpha} F^\alpha;K)=0$. Therefor $\tilde{H}_{i}(\lk_\Del F;K)=0$. This means that $\Del$ is Cohen-Macaulay.
\end{proof}

It follows from Proposition \ref{CM indepen} that:

\begin{cor}\label{CM-ness}
The contraction of a Cohen-Macaulay simplicial complex $\Del$ is Cohen-Macaulay.
\end{cor}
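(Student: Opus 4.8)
The plan is to reduce the statement directly to Proposition~\ref{CM indepen} by exploiting the identification of $\Del$ with the expansion of its contraction. By definition the contraction $\Gamma$ of $\Del$ comes equipped with a vector $\alpha=(a_1,\ldots,a_m)$, and the preceding lemma provides a simplicial isomorphism $\Gamma^\alpha\cong\Del$. So first I would fix this data: let $\Gamma$ be the contraction of $\Del$ by $\alpha$, a complex on $[\bar m]$, so that $\Gamma^\alpha\cong\Del$ as simplicial complexes.

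Since Cohen-Macaulay-ness depends only on the isomorphism type of a complex (it is defined through the vanishing of reduced homology of links, which is invariant under relabelling of vertices), the hypothesis that $\Del$ is Cohen-Macaulay transfers immediately across the isomorphism $\Gamma^\alpha\cong\Del$. That is, $\Gamma^\alpha$ is Cohen-Macaulay.

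Next I would apply Proposition~\ref{CM indepen} with $\Gamma$ playing the role of the complex and $\alpha\in\NN^m$ the role of the expansion vector. The proposition asserts that if the expansion $\Gamma^\alpha$ is Cohen-Macaulay for some $\alpha$, then $\Gamma$ itself is Cohen-Macaulay. Combining this with the previous step yields that $\Gamma$, the contraction of $\Del$, is Cohen-Macaulay, which is exactly the claim.

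I expect no serious obstacle, since the corollary is a formal consequence of the isomorphism $\Gamma^\alpha\cong\Del$ together with Proposition~\ref{CM indepen}; the only point requiring a word of care is the direction in which the two results are chained. One must resist applying Proposition~\ref{CM indepen} to $\Del$ directly, as that would run the wrong way, and instead recognise that it is the contraction $\Gamma$ whose expansion we control. Once this bookkeeping is in place the argument is a one-line deduction.
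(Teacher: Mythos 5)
Your argument is correct and is precisely the deduction the paper intends: the paper states the corollary "follows from Proposition \ref{CM indepen}," implicitly via the preceding lemma's isomorphism $\Gamma^\alpha\cong\Del$, which is exactly the chain you spell out. No differences to report.
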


This can be generalized in the following theorem.

\begin{thm}\label{contract,CM-t}
Let $\Gamma$ be the contraction of a $\CM_t$ simplicial complex
$\Del$, for some $t\geq 0$, by $\alpha=(k_1,\ldots,k_n)$. If
$k_i\geq t$ for all $i$ and $\Gamma$ is pure, then $\Gamma$ is
Buchsbaum.
\end{thm}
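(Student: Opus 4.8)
The plan is to verify the Buchsbaum property through the link criterion of Lemma \ref{CM-t eq} applied with $t=1$. Since $\Gamma$ is assumed pure, that lemma tells us $\Gamma$ is $\CM_1$ (i.e.\ Buchsbaum) exactly when $\lk_\Gamma(\bar y_i)$ is Cohen--Macaulay for every vertex $\bar y_i$ of $\Gamma$. The engine driving the argument is the identification $\Gamma^\alpha\cong\Del$ (the lemma preceding Proposition \ref{CM indepen}) together with the two transfer results already available: the link formula of Proposition \ref{ex indepen}(ii), and the descent of Cohen--Macaulayness along expansion in Proposition \ref{CM indepen}.

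First I would recall that, by the very construction of the contraction, each vertex $\bar y_i$ of $\Gamma$ is an equivalence class $\{x_{i1},\ldots,x_{ik_i}\}$ of vertices of $\Del$, where the members of a class are precisely those vertices lying in exactly the same facets of $\Del$. Consequently this class is itself a face of $\Del$, of cardinality $k_i$, and under the isomorphism $\Gamma^\alpha\cong\Del$ the expanded vertex $\{\bar y_i\}^\alpha$ corresponds precisely to the face $\{x_{i1},\ldots,x_{ik_i}\}$. Next I would apply Proposition \ref{ex indepen}(ii) to the complex $\Gamma$ with $G=\{\bar y_i\}$, obtaining
$$\big(\lk_\Gamma(\bar y_i)\big)^\alpha\;=\;\lk_{\Gamma^\alpha}\big(\{\bar y_i\}^\alpha\big)\;\cong\;\lk_{\Del}\big(\{x_{i1},\ldots,x_{ik_i}\}\big).$$
This is the point where the hypothesis $k_i\geq t$ enters: the face $\{x_{i1},\ldots,x_{ik_i}\}$ has cardinality $k_i\geq t$, so the $\CM_t$ property of $\Del$ forces its link to be Cohen--Macaulay. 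Hence $(\lk_\Gamma(\bar y_i))^\alpha$ is Cohen--Macaulay, and Proposition \ref{CM indepen}, applied to the complex $\lk_\Gamma(\bar y_i)$ with $\alpha$ restricted to the vertices occurring in it, yields that $\lk_\Gamma(\bar y_i)$ is Cohen--Macaulay. Running this over all vertices and invoking Lemma \ref{CM-t eq} then gives that $\Gamma$ is $\CM_1$, that is, Buchsbaum.

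The step that needs genuine care — and where I expect the main obstacle to lie — is the bookkeeping in the identification $\{\bar y_i\}^\alpha\leftrightarrow\{x_{i1},\ldots,x_{ik_i}\}$. One must check carefully that the equivalence class really is a face of $\Del$ (this follows from all its members sharing the same facets) and that its cardinality equals the corresponding entry $k_i$ of the contraction vector, so that the cardinality bound $k_i\geq t$ can legitimately be read off in order to invoke the $\CM_t$ hypothesis. Once this identification is pinned down, the remaining steps are formal consequences of Propositions \ref{ex indepen} and \ref{CM indepen}, and the purity assumption on $\Gamma$ is used only to supply the missing half of the hypothesis in Lemma \ref{CM-t eq}. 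I would also note in passing that for $t=0$ the conclusion is already implied by Corollary \ref{CM-ness}, so the content of the statement is really the uniform treatment of $t\geq 1$.
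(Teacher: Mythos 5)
Your argument is correct and follows essentially the same route as the paper: both proofs hinge on the identification $(\lk_\Gamma G)^\alpha=\lk_{\Del}(G^\alpha)$ under $\Gamma^\alpha\cong\Del$, on the observation that $|G^\alpha|\geq t$ triggers the $\CM_t$ hypothesis on $\Del$, and on descending the resulting Cohen--Macaulayness (or homology vanishing) from the expansion back down to $\lk_\Gamma G$. The only difference is packaging: you check just the vertex links and conclude via Lemma \ref{CM-t eq} and Proposition \ref{CM indepen}, whereas the paper verifies the vanishing of $\tilde{H}_{i}(\lk_\Gamma G;K)$ directly for every nonempty face $G$; your version in fact sidesteps the paper's slightly overstated claim of an isomorphism $\tilde{H}_{i}(\lk_\Gamma G;K)\cong\tilde{H}_{i}(\lk_\Del G^\alpha;K)$, where only the epimorphism of Proposition \ref{ex indepen}(i) is available (and suffices).
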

\begin{proof}
If $t=0$, then we saw in Corollary \ref{CM-ness} that $\Gamma$ is Cohen-Macaulay and so it is $\CM_t$. Hence assume that $t>0$. Let $\Del=\langle F_1,\ldots,F_r\rangle$. We have to show that $\tilde{H}_i(\lk_\Gamma G;K)=0$, for all faces $G\in\Gamma$ with $|G|\geq 1$ and all $i<\dim(\lk_\Gamma G)$.

Let $G\in\Gamma$ with $|G|\geq 1$. Then $|G^\alpha|\geq t$. It follows from Lemma \ref{CM-t eq} and $\CM_t$-ness of $\Del$ that
$$\tilde{H}_{i}(\lk_\Gamma G;K)\cong\tilde{H}_{i}(\lk_\Del G^\alpha;K)=0$$
for $i<\dim(\lk_\Del G^\alpha)$ and, particularly, for $i<\dim(\lk_\Gamma G)$. Therefore $\Gamma$ is Buchsbaum.
\end{proof}

\begin{cor}
Let $\Gamma$ be the contraction of a Buchsbaum simplicial complex
$\Del$. If $\Gamma$ is pure, then $\Gamma$ is also Buchsbaum.
\end{cor}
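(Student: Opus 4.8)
The plan is to obtain this corollary as the case $t=1$ of Theorem \ref{contract,CM-t}. First I would invoke the dictionary recalled in Section 1, namely that a simplicial complex is Buchsbaum exactly when it is $\CM_1$. Under this identification, the hypothesis that $\Del$ is Buchsbaum is precisely the statement that $\Del$ is $\CM_t$ with $t=1$, so the corollary becomes the specialization of Theorem \ref{contract,CM-t} to $t=1$.

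Next I would verify that the numerical condition ``$k_i\geq t$ for all $i$'' appearing in Theorem \ref{contract,CM-t} is automatically satisfied in this situation. Recall from the construction of the contraction that the vector $\alpha=(a_1,\ldots,a_m)$ is built from the equivalence classes $\bar{y}_i=\{x_{i1},\ldots,x_{ia_i}\}$ under the relation $\sim$, so each component $a_i$ is the cardinality of a class $\bar{y}_i$. Since every equivalence class is nonempty, we have $a_i\geq 1=t$ for every $i$. Thus the hypothesis $k_i\geq t$ holds trivially, and no assumption is required beyond the purity of $\Gamma$, which is given.

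With these two observations in place, I would simply apply Theorem \ref{contract,CM-t} with $t=1$: since $\Gamma$ is pure and each $k_i\geq 1$, the theorem yields that $\Gamma$ is Buchsbaum, which is exactly the desired conclusion. I do not expect any genuine obstacle here, as the entire combinatorial and homological content is already carried by Theorem \ref{contract,CM-t}; the only point to record is the elementary remark that equivalence classes are nonempty, so the constraint on the components of $\alpha$ is vacuous when $t=1$.
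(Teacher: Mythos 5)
Your proposal is correct and is exactly the intended derivation: the paper states this corollary as an immediate consequence of Theorem \ref{contract,CM-t}, obtained by setting $t=1$ (Buchsbaum $=\CM_1$) and noting that the condition $k_i\geq 1$ is automatic because the equivalence classes defining $\alpha$ are nonempty. Nothing further is needed.
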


Let $\G$ be a simple graph on the vertex set $[n]$ and let $\Del_\G$ be its independence complex on $[n]$, i.e., a simplicial complex whose faces are the independent vertex sets of $G$. Let $\Gamma$ be the contraction of $\Del_\G$. In the following we show that $\Gamma$ is the independence complex of a simple graph $\H$. We call $\H$ the \emph{contraction} of $\G$.

\begin{lem}
Let $\G$ be a simple graph. The contraction of $\Del_\G$ is the independence complex of a simple graph $\H$.
\end{lem}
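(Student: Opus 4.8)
The plan is to recognize that ``being an independence complex'' is equivalent to ``being a flag complex'', and then to verify that the contraction $\Gamma$ of $\Del:=\Del_\G$ inherits flagness from $\Del_\G$. Concretely, I would first record the following characterization: a simplicial complex $\Sigma$ on a vertex set $W$ equals $\Del_\H$ for some simple graph $\H$ if and only if $\Sigma$ is \emph{flag}, i.e.\ a subset $A\subseteq W$ is a face of $\Sigma$ precisely when every two-element subset of $A$ is a face of $\Sigma$. Indeed, given such a $\Sigma$, one defines $\H$ on $W$ by declaring $\bar{y}_i\bar{y}_j\in E(\H)$ exactly when $\{\bar{y}_i,\bar{y}_j\}\notin\Sigma$; then the faces of $\Del_\H$ are the independent sets of $\H$, which by the flag property are exactly the faces of $\Sigma$. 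Since $\Del_\G$ is itself flag (a vertex set is independent iff each of its pairs is a non-edge), it suffices to prove that $\Gamma$ is flag and to take this $\H$ on the vertex set $[\bar{m}]$.

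Next I would set up the dictionary between faces of $\Gamma$ and faces of $\Del$. By the very definition of the relation $\sim$, a facet $F_t$ contains a vertex of an equivalence class $\bar{y}_i$ if and only if it contains all of $\bar{y}_i$; hence each facet of $\Del$ is a union of equivalence classes, $F_t=\bigcup_{\bar{y}_i\subseteq F_t}\bar{y}_i$. Consequently a set $\{\bar{y}_{i_1},\dots,\bar{y}_{i_s}\}$ is a face of $\Gamma$ (a subset of some $G_t$) if and only if the union $\bar{y}_{i_1}\cup\cdots\cup\bar{y}_{i_s}$ is a face of $\Del$, that is, an independent set of $\G$. In particular, every class $\bar{y}_i$ that is a vertex of $\Gamma$ is itself an independent set of $\G$.

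Finally I would verify flagness of $\Gamma$. Suppose $\{\bar{y}_{i_1},\dots,\bar{y}_{i_s}\}$ has all of its two-element subsets in $\Gamma$; I must show that $Y:=\bar{y}_{i_1}\cup\cdots\cup\bar{y}_{i_s}$ is independent in $\G$. Take vertices $u,v\in Y$, say $u\in\bar{y}_{i_a}$ and $v\in\bar{y}_{i_b}$. If $a=b$, then $u,v$ lie in the single independent set $\bar{y}_{i_a}$, so $uv\notin E(\G)$; if $a\neq b$, then $\{\bar{y}_{i_a},\bar{y}_{i_b}\}\in\Gamma$, so the union $\bar{y}_{i_a}\cup\bar{y}_{i_b}$ is independent by the dictionary, and again $uv\notin E(\G)$. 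Thus every pair in $Y$ is a non-edge; since $\Del_\G$ is flag, $Y$ is independent, and so $\{\bar{y}_{i_1},\dots,\bar{y}_{i_s}\}\in\Gamma$. This is precisely flagness of $\Gamma$, and the proof then concludes by exhibiting $\H$ as constructed above.

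The main obstacle I anticipate is the dictionary step: correctly arguing that the defining property of $\sim$ forces every facet of $\Del$ to be a union of whole equivalence classes, and then packaging this as the clean equivalence ``$\{\bar{y}_{i_1},\dots,\bar{y}_{i_s}\}\in\Gamma \iff \bigcup_a\bar{y}_{i_a}\in\Del$''. Once this correspondence is in place, transferring the pairwise (flag) condition from $\Del_\G$ to $\Gamma$ is routine, the only subtlety being the separate treatment of the two cases $a=b$ and $a\neq b$, which relies on each vertex-class of $\Gamma$ being independent.
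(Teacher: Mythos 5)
Your proof is correct, but it takes a genuinely different route from the paper's. The paper argues algebraically: it reduces the claim to showing that the Stanley--Reisner ideal $I_\Gamma$ is generated in degree $2$, and gets this from the already-established facts that $\Gamma^\alpha\cong\Del_\G$ and that minimal generating sets are compatible with expansion, so that each minimal generator $u$ of $I_\Gamma$ of degree $t$ expands to degree-$t$ generators sitting inside $G(I(\G))$, forcing $t=2$. You instead work entirely combinatorially: you invoke the characterization ``independence complex $=$ flag complex,'' set up the face dictionary $\{\bar{y}_{i_1},\dots,\bar{y}_{i_s}\}\in\Gamma\iff\bigcup_a\bar{y}_{i_a}\in\Del_\G$ (which rests on the correct observation that every facet of $\Del_\G$ is a union of $\sim$-classes), and then transfer the pairwise independence condition, treating the within-class and between-class cases separately. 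Both arguments are sound. The paper's version is shorter given the expansion machinery already built in Section 2 and Lemma 4.4, and it stays consistent with the ideal-theoretic language used throughout; yours is self-contained, avoids the expansion functor entirely, and in effect re-derives the content of Lemma 4.4 in the form of the face dictionary, which arguably makes the underlying combinatorial reason for the result more transparent.
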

\begin{proof}
It suffices to show that $I_\Gamma$ is a squarefree monomial ideal generated in degree 2. Let $\Gamma$ be the contraction of $\Del_\G$ and let $\alpha=(k_1,\ldots,k_n)$ be the vector obtained from the contraction. Let $[n]=\{x_1,\ldots,x_n\}$ be the vertex set of $\Gamma$. Suppose that $u=x_{i_1}\ldots x_{i_t}\in G(I_\Gamma)$. Then $u^\alpha\subset G(I_\Gamma)^\alpha=G(I_{\Del_\G})=G(I(\G)$. Since $u^\alpha=\{x_{i_1j_1}\ldots x_{i_tj_t}:1\leq j_l\leq k_{i_l},1\leq l\leq t\}$ we have $t=2$ and the proof is completed.
\end{proof}

\begin{exam}
Let $\G_1$ and $\G_2$ be, respectively, from left to right the following graphs:

$$\begin{array}{cccc}
\begin{tikzpicture}
\coordinate (a) at (0,0);\fill (0,0) circle (1pt);
\coordinate (b) at (0,1);\fill (0,1) circle (1pt);
\coordinate (c) at (1,0);\fill (1,0) circle (1pt);
\coordinate (d) at (0,-1);\fill (0,-1) circle (1pt);
\coordinate (e) at (-1,0);\fill (-1,0) circle (1pt);
\draw[black] (a) -- (c) -- (d) -- (e) -- (b);
\end{tikzpicture}
&&&
\begin{tikzpicture}
\coordinate (a) at (0,0);\fill (0,0) circle (1pt);
\coordinate (b) at (0,1);\fill (0,1) circle (1pt);
\coordinate (c) at (1,0);\fill (1,0) circle (1pt);
\coordinate (d) at (0,-1);\fill (0,-1) circle (1pt);
\coordinate (e) at (-1,0);\fill (-1,0) circle (1pt);
\draw[black] (e) -- (a) -- (c) -- (d) -- (e) -- (b) -- (c);
\end{tikzpicture}
\end{array}$$

The contraction of $\G_1$ and $\G_2$ are

$$\begin{array}{cccc}
\begin{tikzpicture}
\coordinate (a) at (0,0);\fill (0,0) circle (1pt);
\coordinate (b) at (0,1);\fill (0,1) circle (1pt);
\coordinate (c) at (1,0);\fill (1,0) circle (1pt);
\coordinate (d) at (0,-1);\fill (0,-1) circle (1pt);
\coordinate (e) at (-1,0);\fill (-1,0) circle (1pt);
\draw[black] (a) -- (c) -- (d) -- (e) -- (b);
\end{tikzpicture}
&&&
\begin{tikzpicture}
\coordinate (a) at (0,0);\fill (0,0) circle (1pt);
\coordinate (b) at (1,0);\fill (1,0) circle (1pt);
\draw[black] (a) -- (b);
\end{tikzpicture}
\end{array}$$
The contraction of $\G_1$ is equal to itself but $\G_2$ is contracted to an edge and the vector obtained from contraction is $\alpha=(2,3)$.
\end{exam}

We recall that a simple graph is $\CM_t$ for some $t\geq 0$, if the associated independence complex is $\CM_t$.

\begin{rem}
The simple graph $\G'$ obtained from $\G$ in Lemma 4.3 and Theorem
4.4 of \cite{HaYaZa2} is the expansion of $\G$. Actually, suppose
that $\G$ is a bipartite graph on the vertex set $V(\G)=V\cup W$
where $V=\{x_1,\ldots,x_d\}$ and $W=\{x_{d+1},\ldots,x_{2d}\}$. Then
for $\alpha=(n_1,\ldots,n_d,n_1,\ldots,n_d)$ we have
$\G'=\G^\alpha$. It follows from Theorem \ref{main} that if $\G$ is
$\CM_t$ for some $t\geq 0$ then $\G'$ is $\CM_{t+n-n_{i_0}+1}$ where
$n=\sum^d_{i=1}n_i$ and $n_{i_0}=\min\{n_i>1:i=1,\ldots,d\}$. This
implies that the first part of Theorem 4.4 of \cite{HaYaZa2} is an
obvious consequence of Theorem \ref{main} for $t=0$.
\end{rem}

\subsection*{Acknowledgment}

The author would like to thank Hassan Haghighi from K. N. Toosi University of Technology and Rahim Zaare-Nahandi from University of Tehran for careful
reading an earlier version of this article and for their helpful comments.

\ \\ \\
Rahim Rahmati-Asghar,\\
Department of Mathematics, Faculty of Basic Sciences,\\
University of Maragheh, P. O. Box 55181-83111, Maragheh, Iran.\\
E-mail:  \email{rahmatiasghar.r@gmail.ac.ir}

\end{document}